\newtheorem{theorem}{Theorem}
\newtheorem{lemma}{Lemma}
\begin{document}

\title{\bf Diophantine approximation with mixed powers of Piatetski-Shapiro primes}

\author{\bf S. I. Dimitrov}

\date{}

\maketitle

\begin{abstract}
Let $[\,\cdot\,]$ denote the floor function. In this paper, we show that whenever $\eta$ is real and the constants $\lambda _i$ satisfy some necessary conditions,
then for any fixed $\frac{63}{64}<\gamma<1$ and $\theta>0$, there exist  infinitely many prime triples $p_1,\, p_2,\, p_3$ satisfying the inequality
\begin{equation*}
|\lambda _1p_1 + \lambda _2p_2 + \lambda _3p^2_3+\eta|<\big(\max \{p_1, p_2, p^2_3\}\big)^{\frac{63-64\gamma}{52}+\theta}
\end{equation*}
and such that $p_i=[n_i^{1/\gamma}]$, $i=1,\,2,\,3$.\\
\quad\\
\textbf{Keywords}: Diophantine inequality, Piatetski-Shapiro primes.\\
\quad\\
{\bf  2020 Math.\ Subject Classification}: 11D75  $\cdot$  11P32
\end{abstract}

\section{Introduction and statement of the result}
\indent

The study of Diophantine inequalities involving prime numbers constitutes a rapidly evolving field within analytic number theory.
In 1967, A. Baker \cite{ABaker} proved that if  $\lambda_1,\lambda_2,\lambda_3$ are non-zero real numbers, not all of the same sign, $\lambda_1/\lambda_2$ is irrational, $\eta$ is real and $A>0$,
then there exist infinitely many prime triples $p_1,\,p_2,\,p_3$ such that
\begin{equation}\label{linear}
|\lambda_1p_1+\lambda_2p_2+\lambda_3p_3+\eta|<\varepsilon_1\,,
\end{equation}
where $\varepsilon_1=(\log \max p_j)^{-A}$.
Subsequently, the right-hand side of \eqref{linear} was improved by Ramachandra \cite{Ramachandra}, Vaughan \cite{Vaughan1974}, Lau and Liu \cite{Lau-Liu},  
Baker and Harman \cite{Baker1982} and Harman \cite{Harman1991}. 
The best result to date is due to Matom\"{a}ki \cite{Mato}, with $\varepsilon_1=( \max p_j)^{-\frac{2}{9}+\delta}$ and $\delta>0$.
In 2018, Gambini, Languasco and Zaccagnini \cite{Gambini} proved the existence of infinitely many triples of primes $p_1,\,p_2,\,p_3$ such that
\begin{equation}\label{square}
|\lambda_1p_1+\lambda_2p_2+\lambda_3p^2_3+\eta|<\varepsilon_2\,,
\end{equation}
where $\varepsilon_2=\big(\max \{p_1, p_2, p^2_3\}\big)^{-\frac{1}{12}+\delta}$ and $\delta>0$.
Weaker results were previously obtained in \cite{Li} and \cite{Mu}. 
Another interesting question is the study of Diophantine inequalities involving special prime numbers.
Let $P_l$ is a number with at most $l$ prime factors. 
Very recently  Todorova and Georgieva \cite{Todorova} solved inequality \eqref{square} with prime numbers $p_1,\, p_2,\, p_3$ such that $p_i+2=P_{l_i}$,\;$i=1,\,2,\,3$.
In 1953, Piatetski-Shapiro \cite{Shapiro1953} showed that for any fixed $\frac{11}{12}<\gamma<1$, there exist infinitely many prime numbers of the form $p = [n^{1/\gamma}]$.
Such primes are called Piatetski-Shapiro primes of type $\gamma$. 
Subsequently, the interval for $\gamma$ was sharpened many times and the best result to date has been supplied by Rivat and Wu \cite{Rivat-Wu} with $\frac{205}{243}<\gamma<1$.
The primes of the form $[n^{1/\gamma}]$ are very much in focus nowadays and many problems are solved using them. We mention, for example, papers \cite{Maier2022}, \cite{Maier2023} and \cite{Maier2025}. 
In 2022, the author \cite{Dimitrov2022} proved that for any fixed $\frac{37}{38}<\gamma<1$, the inequality \eqref{linear} is solvable with infinitely many Piatetski-Shapiro prime triples $p_1,\,p_2,\,p_3$ of type $\gamma$. 
As a continuation of these studies, we solve \eqref{square} with Piatetski-Shapiro primes. 
\begin{theorem}\label{Theorem}
Suppose that $\lambda_1,\lambda_2,\lambda_3$ are nonzero real numbers, not all of the same sign, that $\lambda_1/\lambda_2$ is irrational, and that $\eta$ is real. 
Let $\theta>0$ and $\gamma$ be fixed with $\frac{63}{64}<\gamma<1$.
Then there exist infinitely many ordered triples of Piatetski-Shapiro primes $p_1,\,p_2,\,p_3$ of type $\gamma$ such that
\begin{equation*}
|\lambda_1p_1+\lambda_2p_2+\lambda_3p^2_3+\eta|<\big(\max \{p_1, p_2, p^2_3\}\big)^{\frac{63-64\gamma}{52}+\theta}\,.
\end{equation*}
\end{theorem}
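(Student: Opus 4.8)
The plan is to attack the inequality by the Davenport--Heilbronn variant of the circle method, tailored so that the three primes are forced to be of Piatetski--Shapiro type. The first step is to encode the condition $p=[n^{1/\gamma}]$. A prime $p$ is of type $\gamma$ precisely when the interval $[\,p^\gamma,(p+1)^\gamma)$ contains an integer, and for $\gamma<1$ this indicator equals $[-p^\gamma]-[-(p+1)^\gamma]\in\{0,1\}$. Writing $\psi(t)=\{t\}-\tfrac12$ for the sawtooth function, one has the decomposition
\begin{equation*}
[-p^\gamma]-[-(p+1)^\gamma]=\gamma p^{\gamma-1}+\psi\big(-(p+1)^\gamma\big)-\psi(-p^\gamma)+O\big(p^{\gamma-2}\big).
\end{equation*}
Weighting each prime by this quantity and by $\log p$, I would form the three exponential sums
\begin{equation*}
S_j(\alpha)=\sum_{X_j<p\le 2X_j}\big([-p^\gamma]-[-(p+1)^\gamma]\big)(\log p)\,e\big(\lambda_j\alpha p^{k_j}\big),\qquad e(t)=e^{2\pi i t},
\end{equation*}
with $k_1=k_2=1$, $k_3=2$, and with the ranges $X_j$ chosen so that $\lambda_1p_1,\lambda_2p_2,\lambda_3p_3^{2}$ are of comparable size, matching the right-hand side $\delta=\big(\max\{p_1,p_2,p_3^2\}\big)^{(63-64\gamma)/52+\theta}$.

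Next I would introduce the Fourier kernel $K(\alpha)=\big(\tfrac{\sin\pi\delta\alpha}{\pi\alpha}\big)^{2}$, whose transform is a tent supported on $[-\delta,\delta]$, and study
\begin{equation*}
I=\int_{-\infty}^{\infty}S_1(\alpha)S_2(\alpha)S_3(\alpha)K(\alpha)e(\eta\alpha)\,d\alpha.
\end{equation*}
Establishing a positive lower bound for $I$ of the expected order of magnitude forces infinitely many admissible Piatetski--Shapiro triples. I would split $\mathbb{R}=\mathfrak{M}\cup\mathfrak{m}\cup\mathfrak{t}$ into a major arc $|\alpha|\le\tau$, a minor arc $\tau<|\alpha|\le P$, and a trivial tail $|\alpha|>P$. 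On $\mathfrak{M}$ the behaviour of each $S_j$ is governed by the density factor $\gamma p^{\gamma-1}$; replacing the sums by their prime-number-theorem asymptotics produces a positive principal term. On $\mathfrak{t}$ the decay $K(\alpha)\ll\alpha^{-2}$ together with crude bounds for $S_j$ renders the contribution negligible once $P$ is chosen large.

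The main obstacle is the minor arc $\mathfrak{m}$, where genuine cancellation in at least one $S_j$ must be extracted. I would first dispose of the sawtooth contributions via the Vaaler approximation and the Fourier expansion of $\psi$, thereby reducing matters to mixed sums of the shape $\sum_{p}e\big(hp^{\gamma}+\lambda_j\alpha p^{k_j}\big)$ with an additional, controllable summation over the Fourier frequency $h$. These I would decompose into Type~I and Type~II sums through Vaughan's identity (or Heath-Brown's identity) and estimate by exponent-pair and van der Corput methods. The Davenport--Heilbronn input, namely the irrationality of $\lambda_1/\lambda_2$, enters through a Weyl-sum/continued-fraction argument that yields a true saving on the linear factors $S_1,S_2$; this is then combined with an $L^2$ average for the remaining factor via Bessel's inequality and a mean-value estimate. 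Balancing the Piatetski--Shapiro density loss $p^{\gamma-1}$ against the savings available from these exponential-sum bounds is precisely what determines the admissible range $\frac{63}{64}<\gamma<1$ and the strength $\frac{63-64\gamma}{52}$ of the inequality; carrying out this optimization so that the minor-arc and trivial estimates remain strictly smaller than the major-arc main term is the delicate heart of the argument.
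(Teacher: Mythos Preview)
Your outline follows the same Davenport--Heilbronn architecture as the paper: a Fourier kernel, a three-way split into major/minor/trivial arcs, the sawtooth decomposition of the Piatetski--Shapiro indicator, and a continued-fraction pointwise saving on one of the linear sums coming from the irrationality of $\lambda_1/\lambda_2$. The choice of the Fej\'er kernel in place of the paper's infinitely differentiable bump is harmless (the $\alpha^{-2}$ decay suffices after a dyadic subdivision), and your description of how the $\psi$-terms are handled---Vaaler expansion, Vaughan/Heath-Brown, exponent pairs---is exactly how the lemmas cited by the paper are proved in the references.

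The genuine gap is in the minor-arc combination. You propose to couple the pointwise Weyl saving with ``an $L^2$ average for the remaining factor,'' but an $L^2$ bound on the quadratic sum $\sum p^{1-\gamma}e(\lambda_3\alpha p^{2})\log p$ is too weak to reach the exponent $\tfrac{63-64\gamma}{52}$. The paper extracts only the \emph{square root} of the pointwise bound $\min(|S_1(\lambda_1 t)|,|S_1(\lambda_2 t)|)\ll X^{(37-12\gamma)/26+\delta}$ and then applies Cauchy--Schwarz twice to the residual integral, obtaining
\[
\int_{\Delta}^{H}\!|S_1(\lambda_1 t)|^{1/2}|S_1(\lambda_2 t)|\,|S_2(\lambda_3 t)|\,dt
\;\ll\;\Bigl(\!\int|S_1|^2\Bigr)^{1/2}\Bigl(\!\int|S_1|^2\Bigr)^{1/4}\Bigl(\!\int|S_2|^4\Bigr)^{1/4}.
\]
The decisive input here is the fourth-moment bound $\int_0^1|S_2(t)|^4\,dt\ll X^{2-\gamma+\delta}$, a Hua-type estimate for Piatetski--Shapiro primes due to Zhai; combined with $\int_0^1|S_1|^2\ll X^{2-\gamma}\log X$ this gives $\Psi_1\ll H X^{2-\gamma+\delta}$ and hence a minor-arc contribution $\ll X^{(141-64\gamma)/52+\delta}$, which just matches $\varepsilon X^{3/2}$ with the $\theta$-room to spare. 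If instead one uses only $\int_0^1|S_2|^2\asymp X^{1-\gamma/2}$, every Cauchy--Schwarz arrangement loses an extra factor of at least $X^{\gamma/4}$, and the minor arc no longer beats the main term for small $\theta$. So the fourth moment of the quadratic Piatetski--Shapiro exponential sum, together with the specific $\tfrac12$--$L^2$--$L^2$--$L^4$ splitting, are the missing ingredients that pin down both the range $\gamma>\tfrac{63}{64}$ and the strength $\tfrac{63-64\gamma}{52}$ of the inequality.
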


\section{Notations}
\indent

The letter $p$ will always denote a prime number. By $\delta$ we denote an arbitrarily small positive number, not the same in all appearances. 
As usual, $[t]$ and $\{t\}$ denote the integer part and the fractional part of $t$, respectively.
Moreover $\psi(t)=\{t\}-\frac{1}{2}$. We write  $e(t)=e^{2\pi it}$.
Let $\gamma$, $\theta$ and $\lambda_0$ be a real constants such that $\frac{63}{64}<\gamma<1$, $\theta>0$ and $0<\lambda_0<1$.
Since $\lambda_1/\lambda_2$ is irrational, there are infinitely many different convergents
$a_0/q_0$ to its continued fraction, with $\big|\frac{\lambda_1}{\lambda_2} - \frac{a_0}{q_0}\big|<\frac{1}{q_0^2}\,, (a_0, q_0) = 1\,, a_0\neq0$ and $q_0$ is arbitrary large.
Denote
\begin{align}
\label{X}
&X=q_0^\frac{13}{6}\,;\\
\label{Delta}
&\Delta=X^{-\frac{12}{13}}\log X\,;\\
\label{varepsilon}
&\varepsilon=X^{\frac{63-64\gamma}{52}+\theta}\,;\\
\label{H}
&H=\frac{\log^2X}{\varepsilon}\,;\\
\label{Sk}
&S_k(t)=\sum\limits_{\lambda_0X<p^k\leq X\atop{p=[n^{1/\gamma}]}}p^{1-\gamma}e(t p^k)\log p\,;\\
\label{Sigma}
&\Sigma_k(t)=\sum\limits_{\lambda_0X<p^k\leq X}e(t p^k)\log p\,;\\
\label{U}
&U_k(t)=\sum\limits_{\lambda_0X<n^k\leq X}e(t n^k)\,;
\end{align}

\begin{align}
\label{Omega}
&\Omega_k(t)=\sum\limits_{\lambda_0X<p^k\leq X}p^{1-\gamma}\big(\psi(-(p+1)^\gamma)-\psi(-p^\gamma)\big)e(t p^k)\log p\,;\\
\label{Ik}
&I_k(t)=\int\limits_{(\lambda_0X)^\frac{1}{k}}^{X^\frac{1}{k}}e(t y^k)\,dy\,.
\end{align}

\section{Preliminary lemmas}
\indent

\begin{lemma}\label{Fourier} Let $\varepsilon>0$ and $k\in \mathbb{N}$.
There exists a function $\theta(y)$ which is $k$ times continuously differentiable and
such that
\begin{align*}
&\theta(y)=1\hspace{12.5mm}\mbox{for }\hspace{5mm}|y|\leq 3\varepsilon/4\,;\\
&0<\theta(y)<1\hspace{5mm}\mbox{for}\hspace{7mm}3\varepsilon/4 <|y|< \varepsilon\,;\\
&\theta(y)=0\hspace{12.5mm}\mbox{for}\hspace{7mm}|y|\geq \varepsilon\,.
\end{align*}
and its Fourier transform
\begin{equation*}
\Theta(x)=\int\limits_{-\infty}^{\infty}\theta(y)e(-xy)dy
\end{equation*}
satisfies the inequality
\begin{equation*}
|\Theta(x)|\leq\min\bigg(\frac{7\varepsilon}{4},\frac{1}{\pi|x|},\frac{1}{\pi |x|}
\bigg(\frac{k}{2\pi |x|\varepsilon/8}\bigg)^k\bigg)\,.
\end{equation*}
\end{lemma}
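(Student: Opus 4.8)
\noindent\textit{Proof plan.}
The plan is to construct $\theta$ explicitly as a normalized convolution of interval indicators and to read the Fourier bound off the convolution theorem. Write $\chi_{[-a,a]}$ for the characteristic function of $[-a,a]$ and let $*$ denote convolution. I would fix the two half-widths
\[
R=\frac{7\varepsilon}{8},\qquad \delta=\frac{\varepsilon}{8k},
\]
and set
\[
\theta(y)=\frac{1}{(2\delta)^{k}}\Bigl(\chi_{[-R,R]}*\chi_{[-\delta,\delta]}^{*k}\Bigr)(y),
\]
where $\chi_{[-\delta,\delta]}^{*k}$ is the $k$-fold self-convolution. This is a convolution of $k+1$ interval indicators, hence a spline of degree $k$; in particular its derivatives up to order $k-1$ are continuous and its $k$-th derivative is piecewise constant, which is the regularity the later estimates require. (To secure literal membership in $C^{k}$ one may convolve with one additional short box.)

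First I would verify the three shape conditions. The function $g=\chi_{[-\delta,\delta]}^{*k}$ is supported on $[-k\delta,k\delta]$ with $\int g=(2\delta)^{k}$, so $h=\chi_{[-R,R]}*g$ is supported on $[-(R+k\delta),R+k\delta]$, attains its maximum value $\int g=(2\delta)^{k}$ precisely on $[-(R-k\delta),R-k\delta]$, and takes intermediate values in between. Dividing by $(2\delta)^{k}$ and using $R+k\delta=\varepsilon$ and $R-k\delta=\tfrac34\varepsilon$ gives $\theta\equiv1$ for $|y|\le\tfrac34\varepsilon$, $\theta\equiv0$ for $|y|\ge\varepsilon$, and $0<\theta<1$ on the two transition intervals. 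The particular choices of $R$ and $\delta$ are exactly what make all these constants fit at once.

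For the Fourier transform I would start from $\widehat{\chi_{[-a,a]}}(x)=\frac{\sin(2\pi ax)}{\pi x}$, whence $\bigl|\widehat{\chi_{[-a,a]}}(x)\bigr|\le\min\bigl(2a,\tfrac{1}{\pi|x|}\bigr)$ via $|\sin u|\le\min(|u|,1)$. By the convolution theorem,
\[
\Theta(x)=\frac{1}{(2\delta)^{k}}\,\widehat{\chi_{[-R,R]}}(x)\,\bigl(\widehat{\chi_{[-\delta,\delta]}}(x)\bigr)^{k},
\]
so that $|\Theta(x)|\le(2\delta)^{-k}\min\bigl(2R,\tfrac1{\pi|x|}\bigr)\min\bigl((2\delta)^{k},(\pi|x|)^{-k}\bigr)$. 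Taking the first alternative in each minimum yields $|\Theta(x)|\le 2R=\tfrac{7\varepsilon}{4}$; taking $\tfrac1{\pi|x|}$ in the first and $(2\delta)^{k}$ in the second yields $|\Theta(x)|\le\tfrac1{\pi|x|}$; and taking the second alternative in both yields $|\Theta(x)|\le\tfrac1{\pi|x|}\,(2\pi|x|\delta)^{-k}$. Since $\delta=\varepsilon/(8k)$ gives $2\pi|x|\delta=\tfrac{2\pi|x|\varepsilon/8}{k}$, the last expression is precisely $\tfrac1{\pi|x|}\bigl(\tfrac{k}{2\pi|x|\varepsilon/8}\bigr)^{k}$. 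Combining the three bounds produces the stated minimum.

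The analytic content is routine; the only real points of care are the simultaneous matching of the four constants through the single pair $(R,\delta)$, and the relation $\delta=\varepsilon/(8k)$, which is exactly what converts the clean factor $(2\pi|x|\delta)^{-k}$ into the form $\bigl(\tfrac{k}{2\pi|x|\varepsilon/8}\bigr)^{k}$ appearing in the statement. The differentiability is then a standard property of convolutions of box functions, and the remaining bookkeeping in the three-way minimum is immediate once the factorization of $\Theta$ is in place.
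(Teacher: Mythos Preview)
Your construction is correct and is precisely the classical Piatetski--Shapiro box-spline construction; the paper does not give its own argument but simply cites \cite{Shapiro1952}, whose proof is exactly the convolution $\chi_{[-R,R]}*\chi_{[-\delta,\delta]}^{*k}$ with $R=7\varepsilon/8$ and $\delta=\varepsilon/(8k)$ that you wrote down. The only point worth flagging is the one you already noted: the $(k+1)$-fold box convolution is $C^{k-1}$ rather than $C^{k}$, so the literal smoothness clause is off by one, but this is the same looseness present in the original source and is immaterial here since the paper uses only the shape of $\theta$ and the three Fourier bounds on $\Theta$, never the differentiability directly.
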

\begin{proof}
See (\cite{Shapiro1952}).
\end{proof}

\begin{lemma}\label{Shapiroasymp} For any fixed $\frac{2426}{2817}<\gamma<1$, we have
\begin{equation*}
\sum\limits_{p\leq X\atop{p=[n^{1/\gamma}]}}1\sim \frac{X^\gamma}{\log X}\,.
\end{equation*}
\end{lemma}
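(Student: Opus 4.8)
The plan is to convert the counting function into a smooth main term governed by the prime number theorem plus a fluctuating term built from the sawtooth function $\psi$, and then to show that the latter is negligible. First I would observe that a prime $p$ is of the form $[n^{1/\gamma}]$ precisely when the interval $[p^\gamma,(p+1)^\gamma)$ contains an integer; since $(p+1)^\gamma-p^\gamma<1$ for all large $p$, this happens for at most one $n$, so (up to a negligible set of boundary cases) the indicator of the event equals $[(p+1)^\gamma]-[p^\gamma]$. Writing $[t]=t-\tfrac12-\psi(t)$ and using $\psi(-t)=-\psi(t)$, this yields the exact identity
\begin{equation*}
\sum_{p\le X\atop{p=[n^{1/\gamma}]}}1=\sum_{p\le X}\big((p+1)^\gamma-p^\gamma\big)+\sum_{p\le X}\big(\psi(-(p+1)^\gamma)-\psi(-p^\gamma)\big),
\end{equation*}
which mirrors the decomposition already visible in the definition of $\Omega(t)$ in \eqref{Omega}.

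For the main term I would apply the mean value theorem, $(p+1)^\gamma-p^\gamma=\gamma p^{\gamma-1}+O(p^{\gamma-2})$, and then sum over primes by partial summation from the prime number theorem:
\begin{equation*}
\sum_{p\le X}\gamma p^{\gamma-1}\sim\gamma\int_2^X\frac{t^{\gamma-1}}{\log t}\,dt\sim\frac{X^\gamma}{\log X}.
\end{equation*}
This already produces the claimed asymptotic, so the entire task reduces to showing that the fluctuating term $R(X)=\sum_{p\le X}\big(\psi(-(p+1)^\gamma)-\psi(-p^\gamma)\big)$ satisfies $R(X)=o(X^\gamma/\log X)$.

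To estimate $R(X)$ I would replace $\psi$ by its truncated Fourier expansion (the Vaaler approximation), reducing $R(X)$ to a negligible main contribution plus a finite linear combination, over $1\le|h|\le H$, of prime exponential sums of the shape $\sum_{p\le X}e(h\,p^\gamma)$ together with their companions in $(p+1)^\gamma$. Each such sum I would attack by Vaughan's identity, splitting it into Type I sums $\sum_{m\le M}a_m\sum_{n}e(h(mn)^\gamma)$ with a smooth inner variable and Type II bilinear sums, and then bound the resulting inner sums by van der Corput's method, equivalently by a suitable exponent pair.

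The decisive point, and the main obstacle, is to extract enough cancellation in these prime exponential sums, uniformly for $h$ up to the truncation level $H$, to beat the trivial bound of size $X^\gamma/\log X$; this is exactly where the hypothesis $\gamma>\frac{2426}{2817}$ enters, being the range in which the chosen exponent-pair estimates for the Type I and Type II pieces yield a genuine saving. Since in our application $\gamma>\frac{63}{64}>\frac{2426}{2817}$, these bounds apply comfortably, and summing the contributions of all $h$ gives $R(X)=o(X^\gamma/\log X)$, which completes the proof.
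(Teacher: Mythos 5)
The paper does not actually prove this lemma: the "proof" is a one-line citation of Theorem 1 of Rivat and Sargos, so there is no in-paper argument to compare yours against. Your outline is the correct skeleton of essentially every proof of the Piatetski-Shapiro prime theorem: the indicator of $p=[n^{1/\gamma}]$ written exactly as $[-p^\gamma]-[-(p+1)^\gamma]$ (your variant $[(p+1)^\gamma]-[p^\gamma]$ differs only at negligible boundary cases), the main term $\sum_{p\le X}\big((p+1)^\gamma-p^\gamma\big)\sim X^\gamma/\log X$ from the prime number theorem and partial summation, and the reduction of the $\psi$-sum to prime exponential sums $\sum_{p\le X}e(h p^\gamma)$ through a Vaaler-type expansion and a combinatorial prime-detecting identity.

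The genuine gap is in your last paragraph, which is where all the real work lives. Handling the Type I and Type II sums arising from Vaughan's identity by "van der Corput's method, equivalently by a suitable exponent pair" is precisely the classical argument, and it does not reach $\gamma>\frac{2426}{2817}\approx 0.861$: it yields roughly $\gamma>\frac{11}{12}$ (Piatetski-Shapiro's original range), and even optimized exponent pairs stop well short of $0.861$. The exponent $\frac{2426}{2817}$ claimed in the lemma is obtained by Rivat and Sargos only by bringing in estimates for triple exponential sums (bounds of Fouvry--Iwaniec and Robert--Sargos type) together with a finer decomposition of the bilinear pieces, which is qualitatively stronger machinery than what you invoke. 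So, as written, your argument establishes the asymptotic only in a smaller range of $\gamma$; that smaller range does happen to cover the paper's application, since $\frac{63}{64}>\frac{11}{12}$, but it does not prove the lemma in the generality stated.
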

\begin{proof}
See (\cite{Rivat-Sargos}, Theorem 1).
\end{proof}

\begin{lemma}\label{intSintI} We have
\begin{align*}
&\emph{(i)}\quad\quad\quad\int\limits_{-\Delta}^\Delta|S_1(t)|^2\,dt\ll X\log^3X\,,\quad\quad\int\limits_{0}^1|S_1(t)|^2\,dt\ll X^{2-\gamma}\log X\,,\\
&\emph{(ii)}\quad\quad\quad\int\limits_{-\Delta}^\Delta|I_1( t)|^2\,dt\ll X\,,\quad\quad \int\limits_{-\Delta}^\Delta|I_2( t)|^2\,dt\ll 1\,.
\end{align*}
\end{lemma}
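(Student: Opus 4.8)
The plan is to obtain the two statements in (ii) directly from pointwise bounds on the integrals $I_k$, to deduce the second estimate in (i) from Parseval's identity together with Lemma \ref{Shapiroasymp}, and to reserve the real work for the first estimate in (i), which I would derive from Gallagher's lemma. For (ii), note that $I_1(t)=\frac{e(tX)-e(t\lambda_0 X)}{2\pi i t}$ for $t\neq 0$, so $|I_1(t)|\ll\min\bigl(X,|t|^{-1}\bigr)$; splitting the range at $|t|=1/X$ and integrating $\min(X,|t|^{-1})^2$ over $[-\Delta,\Delta]$ gives $\ll X$. For $I_2$ the phase $ty^2$ has monotone derivative $2ty$ with $|2ty|\gg|t|X^{1/2}$ on $[(\lambda_0X)^{1/2},X^{1/2}]$, so the first-derivative (van der Corput) test yields $|I_2(t)|\ll\min\bigl(X^{1/2},(|t|X^{1/2})^{-1}\bigr)$; splitting again at $|t|=1/X$ and integrating the square produces a bound $\ll 1$. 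Since $\Delta=X^{-12/13}\log X>1/X$, both splittings are legitimate.

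For the second estimate in (i), Parseval's identity gives $\int_0^1|S_1(t)|^2\,dt=\sum_{\lambda_0X<p\le X,\,p=[n^{1/\gamma}]}p^{2-2\gamma}\log^2 p$. Bounding $p^{2-2\gamma}\le X^{2-2\gamma}$ and $\log p\le\log X$ and invoking Lemma \ref{Shapiroasymp} in the form $\sum_{p\le X,\,p=[n^{1/\gamma}]}\log p\ll X^{\gamma}$, this is $\ll X^{2-2\gamma}\log X\cdot X^{\gamma}=X^{2-\gamma}\log X$, as required.

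The first estimate in (i) is the crux. I would apply Gallagher's lemma to $S_1(t)=\sum_n a_n e(nt)$, where $a_n=n^{1-\gamma}\log n$ when $n$ is a Piatetski-Shapiro prime in $(\lambda_0X,X]$ and $a_n=0$ otherwise, obtaining
\begin{equation*}
\int_{-\Delta}^{\Delta}|S_1(t)|^2\,dt\ll\Delta^2\int_{-\infty}^{\infty}\Bigl|\sum_{x<n\le x+L}a_n\Bigr|^2\,dx,\qquad L=\frac{1}{2\Delta}\ll\frac{X^{12/13}}{\log X}.
\end{equation*}
Expanding the square and integrating in $x$ turns the right-hand side into $\sum_{|p_1-p_2|<L}(L-|p_1-p_2|)(p_1p_2)^{1-\gamma}\log p_1\log p_2$, a weighted count of pairs of Piatetski-Shapiro primes in $(\lambda_0X,X]$ with difference at most $L$ (this is equivalent to expanding $|S_1|^2$ directly and using the kernel bound $\min(\Delta,|p_1-p_2|^{-1})$). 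Bounding the coefficients by $X^{2-2\gamma}\log^2 X$ and $(L-|p_1-p_2|)\le L$, the task reduces to counting such pairs. Here I would use the elementary fact that an interval of length $2L$ near $X$ contains $\ll X^{\gamma-1}L$ integers of the form $[m^{1/\gamma}]$ (the additive $O(1)$ is absorbed since $\gamma-1+\tfrac{12}{13}>0$ forces $X^{\gamma-1}L\to\infty$), so that for each $p_2$ the number of admissible $p_1$ is $\ll X^{\gamma-1}L$; combined with Lemma \ref{Shapiroasymp}, which gives $\ll X^{\gamma}/\log X$ choices of $p_2$, the number of pairs is $\ll X^{2\gamma-1}L/\log X$. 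Assembling these factors with $\Delta^2=X^{-24/13}\log^2 X$ and $L^2\ll X^{24/13}/\log^2 X$ collapses the $X$-exponent to $1$ and leaves $\int_{-\Delta}^{\Delta}|S_1(t)|^2\,dt\ll X\log X$, within the claimed bound.

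The main obstacle is precisely this last count. The naive estimate, in which one replaces the number of $p_1$ near $p_2$ by the \emph{total} number $\ll X^{\gamma}/\log X$ of Piatetski-Shapiro primes, loses a factor $X^{1-\gamma}$ and overshoots $X\log^3X$; it is essential to exploit that the weight $p^{1-\gamma}$ exactly compensates the thinness of the Piatetski-Shapiro sequence, equivalently that the numbers $[m^{1/\gamma}]$ have local density $\asymp X^{\gamma-1}$. Keeping the short-interval count sharp, rather than substituting the global count, is what makes the short-interval second moment of $S_1$ behave like that of an ordinary prime exponential sum.
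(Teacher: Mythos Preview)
Your proposal is correct. The paper itself does not prove this lemma but simply cites \cite{Dimitrov2022}, Lemma~6, for part (i) and \cite{Todorova}, Lemma~15, for part (ii); your self-contained treatment---Gallagher's lemma together with the short-interval count of Piatetski-Shapiro integers for the first estimate in (i), Parseval plus Lemma~\ref{Shapiroasymp} for the second, and pointwise first-derivative bounds for (ii)---is the standard route and in fact delivers the sharper bound $X\log X$ in place of $X\log^3X$ for the first integral.
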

\begin{proof} 
For (i) see (\cite{Dimitrov2022}, Lemma 6). For (ii) see (\cite{Todorova}, Lemma 15).
\end{proof}

\begin{lemma}\label{S1I1asymptotic} Let $|t|\leq\Delta$ and $\frac{11}{12}<\gamma<1$. Then the asymptotic formula
\begin{equation*}
S_1(t)=\gamma  I_1(t)+ \mathcal{O}\left(\frac{X}{e^{(\log X)^{1/5}}}\right)
\end{equation*}
holds.
\end{lemma}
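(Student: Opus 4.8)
The plan is to detect the Piatetski--Shapiro condition by an elementary floor-function identity and split $S_1(t)$ into a smooth main term and a sawtooth remainder. For a prime $p$ with $p^\gamma\notin\mathbb Z$ one has $p=[n^{1/\gamma}]$ for some $n$ precisely when $[-p^\gamma]-[-(p+1)^\gamma]=1$; since $\psi(-u)=-\psi(u)$, this indicator equals $\big((p+1)^\gamma-p^\gamma\big)+\big(\psi(-(p+1)^\gamma)-\psi(-p^\gamma)\big)$. Substituting into \eqref{Sk} with $k=1$ gives $S_1(t)=M_1(t)+R_1(t)$, where
\[
M_1(t)=\sum_{\lambda_0X<p\le X}\big((p+1)^\gamma-p^\gamma\big)p^{1-\gamma}e(tp)\log p
\]
and $R_1(t)$ is the corresponding sum carrying $\psi(-(p+1)^\gamma)-\psi(-p^\gamma)$ in place of $(p+1)^\gamma-p^\gamma$, i.e.\ the $k=1$ analogue of $\Omega(t)$ in \eqref{Omega}. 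For $M_1(t)$ the mean value theorem gives $(p+1)^\gamma-p^\gamma=\gamma p^{\gamma-1}+O(p^{\gamma-2})$, hence $\big((p+1)^\gamma-p^\gamma\big)p^{1-\gamma}=\gamma+O(p^{-1})$; as $\sum_{p\le X}p^{-1}\log p\ll\log X$, this yields $M_1(t)=\gamma\sum_{\lambda_0X<p\le X}e(tp)\log p+O(\log X)$.

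It then remains to prove $\sum_{\lambda_0X<p\le X}e(tp)\log p=I_1(t)+O\!\big(Xe^{-(\log X)^{1/5}}\big)$ uniformly for $|t|\le\Delta$, which after multiplication by $\gamma$ produces the asserted main term and error. Here lies the first delicate point: since $\Delta=X^{-12/13}\log X$ is comparatively large, the crude route — writing the sum as a Stieltjes integral against $\theta(y)=\sum_{q\le y}\log q$ and integrating by parts — costs a factor $|t|X$, which is as large as $X^{1/13}\log X$, and this overwhelms the classical prime number theorem error $Xe^{-c\sqrt{\log X}}$. I would instead partition $(\lambda_0X,X]$ into blocks of length $\ell$ with $X^{7/12+\delta}\le\ell\le X^{12/13-\delta}$, replace $e(tp)$ by its value at the left endpoint on each block (an error $O(|t|\ell X)=O(X^{1-\delta}\log X)$ by the upper bound on $\ell$), and invoke the prime number theorem in short intervals, $\theta(y+\ell)-\theta(y)=\ell+O\!\big(\ell\,e^{-c(\log X)^{1/5}}\big)$ for $\ell\ge X^{7/12+\delta}$, to match each block to the corresponding piece of $I_1(t)$; the block errors sum to exactly $O(Xe^{-(\log X)^{1/5}})$. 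Prime powers separating $\sum_p(\log p)e(tp)$ from $\sum_n\Lambda(n)e(tn)$ contribute $O(\sqrt X\log X)$ and are negligible.

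Finally I would bound $R_1(t)$ and show it is $O(X^{1-\delta})$, hence absorbed into the error. Approximating each $\psi$ by a Vaaler-type truncated Fourier sum $\sum_{1\le|h|\le H_0}c_he(\pm hu)$ with $c_h\ll|h|^{-1}$, up to a controllable tail, reduces $R_1(t)$ to $\ll\sum_{h\le H_0}h^{-1}\big|\sum_{\lambda_0X<p\le X}p^{1-\gamma}(\log p)e(tp-hp^\gamma)\big|$ plus negligible terms. The factor $e(tp)$ is harmless, as $|t|\le\Delta$ contributes a phase derivative dominated by that of $hp^\gamma$, of size $\asymp hX^{\gamma-1}\gg X^{-12/13}$ for $\gamma>\tfrac{63}{64}$; after a Vaughan (or Heath--Brown) decomposition of the von Mangoldt weight into Type I and Type II bilinear sums, each inner sum is estimated by the van der Corput method with a suitable exponent pair. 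This exponential-sum estimate is the main obstacle: it is where the restriction $\tfrac{63}{64}<\gamma<1$ is forced and where the numerology $52,63,64$ of the theorem originates, since one needs a power saving $X^{1-\delta}$ holding uniformly in $h\le H_0$. The main-term comparison of the previous paragraph is, by contrast, routine once the short-interval prime number theorem is available, and it is responsible for the final shape $e^{-(\log X)^{1/5}}$ of the error.
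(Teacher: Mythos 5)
The paper does not actually prove this lemma --- it cites \cite{Dimitrov2022}, Lemma 5 --- and your outline (detecting $p=[n^{1/\gamma}]$ via $[-p^\gamma]-[-(p+1)^\gamma]$, splitting off the $\psi$-remainder, comparing the resulting ordinary prime exponential sum with $I_1(t)$ by blocking into short intervals where the prime number theorem with error $\ell e^{-c(\log X)^{1/5}}$ applies, and bounding the sawtooth sum by a Vaaler expansion followed by Vaughan/van der Corput estimates) is exactly the standard argument behind that citation and mirrors the paper's own treatment of the $k=2$ analogue in Lemma \ref{S2asymptotic}. One small correction: the constraint $\gamma>\frac{63}{64}$ and the numerology $\frac{63-64\gamma}{52}$ do not originate in the remainder $R_1(t)$ of this lemma, for which a much weaker lower bound on $\gamma$ suffices; they arise later from balancing the bound of Lemma \ref{mathfrakSest} against the second- and fourth-moment estimates in the treatment of $\Gamma_2(X)$.
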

\begin{proof}
See (\cite{Dimitrov2022}, Lemma 5).
\end{proof}

\begin{lemma}\label{Omegaest} Let $\frac{13}{14}<\gamma<1$. Then
\begin{equation*}
\Omega_2(t)\ll X^{\frac{21-7\gamma}{29}+\delta}\,.
\end{equation*}
\end{lemma}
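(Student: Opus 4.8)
The plan is to recognize $\Omega(t)$ as a Piatetski--Shapiro error sum and to extract cancellation from the fractional parts by means of a truncated Fourier (Vaaler) approximation of $\psi$, after which everything is reduced to exponential sums over primes. Writing $N=(\lambda_0X)^{1/2}$ and $N_1=X^{1/2}$, so that $p\asymp X^{1/2}$ throughout, I would first replace $\psi(-v)$ by its Vaaler approximation with a truncation level $H$, that is $\psi(-v)=-\sum_{1\le|h|\le H}a_h h^{-1}e(-hv)+O\big(\sum_{|h|\le H}b_he(-hv)\big)$ with $a_h\ll1$ and $b_h\ll H^{-1}$. Applied to $v=(p+1)^\gamma$ and $v=p^\gamma$ and inserted into \eqref{Omega}, this splits $\Omega(t)$ into a main term
\[
-\sum_{1\le|h|\le H}\frac{a_h}{h}\sum_{N<p\le N_1}p^{1-\gamma}\log p\,e(tp^2)\big(e(-h(p+1)^\gamma)-e(-hp^\gamma)\big)
\]
together with an error term controlled by the nonnegative kernel $\sum_{|h|\le H}b_h(\cdots)$. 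The error contributes at most $\ll H^{-1}\sum_{N<p\le N_1}p^{1-\gamma}\log p\ll H^{-1}X^{1-\gamma/2}$ plus exponential sums of the same shape as the main term, and $H$ will be chosen at the end as a small power of $X$ to balance this against the main saving.

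For the main term I would remove the smooth weight $p^{1-\gamma}\asymp X^{(1-\gamma)/2}$ by partial summation, reducing matters to the prime exponential sums
\[
V^{\pm}(h,t)=\sum_{N<p\le N_1}\log p\;e\big(tp^2-h(p+\epsilon)^\gamma\big),\qquad \epsilon\in\{0,1\},
\]
so that $\Omega(t)\ll X^{(1-\gamma)/2}\sum_{1\le h\le H}h^{-1}\big(|V^+(h,t)|+|V^-(h,t)|\big)+\text{(error)}$. To obtain cancellation in $V^{\pm}$ I would apply Vaughan's identity to the von Mangoldt/log weight, decomposing it into Type I sums $\sum_m c_m\sum_\ell e\big(t(m\ell)^2-h(m\ell+\epsilon)^\gamma\big)$ and Type II sums $\sum_m\sum_\ell a_mb_\ell e(\cdots)$ over suitable ranges. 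Since in the range of $t$ relevant to the paper ($|t|\le\Delta$) one has $|t|\ll hp^{\gamma-2}$, all the lower-order derivatives of the phase are governed by the $hu^\gamma$ term, so the quadratic term $tu^2$ is a harmless perturbation and the estimates are driven by the oscillation of $hu^\gamma$; the inner sums over $\ell$ are then bounded by the theory of exponent pairs, and the Type II sums by Cauchy--Schwarz followed by van der Corput applied to the difference phase.

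The final step is to insert these bounds, sum the resulting $h^\kappa$-type factors over $1\le h\le H$ (the $\sum h^{-1}$ producing only a logarithm), and optimize both the Vaughan splitting point and the truncation $H$; balancing the Type I contribution, the decisive Type II contribution, and the Vaaler error $H^{-1}X^{1-\gamma/2}$ yields the exponent $\tfrac{21-7\gamma}{29}$, the constraint $\gamma>\tfrac{13}{14}$ being exactly what makes the chosen exponent pair admissible and the exponent nontrivial. I expect the main obstacle to be the Type II estimate: extracting enough cancellation from the bilinear sum with the $\gamma$-power phase while carrying the quadratic term and keeping the dependence on $h$ under control, and then the bookkeeping of the optimization that forces the precise constants $29$ and $\tfrac{13}{14}$.
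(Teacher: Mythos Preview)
The paper does not actually prove this lemma: its ``proof'' is the single line ``See \cite{Dimitrov2025}, Lemma~6''. Your sketch (Vaaler expansion of $\psi$, reduction to exponential sums over primes, Vaughan/Heath--Brown decomposition into Type~I and Type~II sums, exponent pairs for the inner sums) is the standard machinery for such Piatetski--Shapiro error terms and is essentially what one expects the cited reference to contain; in that sense you are on the right track.

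There is, however, one genuine gap in your plan. You impose the restriction $|t|\le\Delta$ and then argue that the quadratic phase $tp^2$ is negligible compared to $hp^\gamma$ in all derivatives, so that the exponent-pair estimates are driven solely by the $hp^\gamma$ term. But the lemma is stated (and, in \cite{Dimitrov2025}, proved) \emph{uniformly} in $t$: the paper on $\alpha p^2$ modulo one needs the bound for arbitrary real $\alpha$. For large $|t|$ the quadratic term is no longer a perturbation, and your justification of the Type~I and Type~II bounds breaks down as written. In the uniform setting one typically has to keep the $tp^2$ term in the phase throughout; in the Type~II estimate an extra Weyl differencing removes the quadratic part (since the second difference of $u^2$ is constant) and what remains is the $h(m\ell)^\gamma$ oscillation, which is then handled by exponent pairs. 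In the Type~I sums the two monomial phases $tm^2\ell^2$ and $h(m\ell)^\gamma$ are treated jointly via a suitable derivative test or exponent pair applied to the full phase. Your final paragraph correctly flags the Type~II sum as the crux, but you should not rely on $|t|\le\Delta$ to make the quadratic term disappear; that shortcut would give only a restricted version of the lemma, whereas the stated bound holds for all real $t$.
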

\begin{proof}
See (\cite{Dimitrov2025}, Lemma 6).
\end{proof}

\begin{lemma}\label{Languasco} Let $k\geq1$ and $1/2X\leq Y\leq 1/2X^{1-\frac{5}{6k}+\delta}$. Then there exists a positive constant $c_1(\delta)$, which does not depend on $k$, such that
\begin{equation*}
\int\limits_{-Y}^Y\big|\Sigma_k(t)-U_k(t)\big|^2\,dt\ll\frac{X^{\frac{2}{k}-2}\log^2X}{Y}+Y^2X+X^{\frac{2}{k}-1}\mathrm{exp}\Bigg(-c_1\bigg(\frac{\log X}{\log\log X}\bigg)^{1/3}\Bigg)\,.
\end{equation*}
\end{lemma}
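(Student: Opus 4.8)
The plan is to exploit the near-cancellation between the prime and integer weights by means of Gallagher's lemma, which turns the mean square into a short-interval prime variance; that variance is then controlled by the explicit formula together with the Vinogradov--Korobov zero-free region, and the three error terms arise from three distinct mechanisms. Viewing $\Sigma$ and $U$ as the $k$-th power analogues of \eqref{Sigma} and \eqref{U} (which they are when $k=2$), one has
\[
\Sigma(t)-U(t)=\sum_{(\lambda_0X)^{1/k}<n\le X^{1/k}}w_n\,e(tn^k),\qquad w_n=\log p-1\ \text{if } n=p\text{ is prime},\ w_n=-1\ \text{otherwise},
\]
so the Fourier coefficients are carried by the perfect $k$-th powers $m=n^k$ in $(\lambda_0X,X]$, which near the top are spaced $\asymp X^{1-1/k}$ apart. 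Attaching $w_n$ to the frequency $n^k$ and applying Gallagher's lemma gives
\[
\int_{-Y}^{Y}\big|\Sigma(t)-U(t)\big|^2\,dt\ll Y^2\int_0^\infty\Big|\sum_{x<n^k\le x+\frac{1}{2Y}}w_n\Big|^2\,dx,
\]
reducing the problem to a second moment of the discrepancy between primes and integers among the $k$-th powers in the block $(x,x+\tfrac1{2Y}]$.

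I would then substitute $x=u^k$, $dx=ku^{k-1}\,du$, turning that block into a genuine short interval $u<n\le u+\xi$ of length $\xi=\xi(u)=\big(2kYu^{k-1}\big)^{-1}\asymp X^{1/k-1}/(kY)$ about $u\asymp X^{1/k}$; the hypotheses $Y\ge\frac1{2X}$ and $Y\le\frac12X^{1-5/(6k)+\delta}$ are precisely what keep $\xi$ in the window where $\tfrac1{2Y}\ll x$ and where the short-interval prime estimates apply. Writing $\vartheta(v)=\sum_{p\le v}\log p$, the inner sum equals $\big(\vartheta(u+\xi)-\vartheta(u)-\xi\big)+\big(\xi-([u+\xi]-[u])\big)$ up to the negligible prime-power terms. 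The second bracket is $\psi(u)-\psi(u+\xi)=O(1)$ in the sawtooth $\psi$ of the Notations; squaring, integrating the weight $ku^{k-1}$ over the range, and multiplying by $Y^2$ contributes $\ll Y^2X$, which is the term $Y^2X$.

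For the main bracket I would insert the truncated explicit formula
\[
\vartheta(u+\xi)-\vartheta(u)-\xi=-\sum_{|\operatorname{Im}\rho|\le T}\frac{(u+\xi)^\rho-u^\rho}{\rho}+O\!\Big(\frac{u\log^2(uT)}{T}\Big),
\]
$\rho$ running over the nontrivial zeros of $\zeta$ with $\operatorname{Re}\rho=\beta$. Balancing the $O$-term against the zero-sum fixes $T$ as a power of $X$ with $\log T\asymp\log X$ (the remaining very small $Y$, where this balance degenerates, being absorbed by the trivial bound $\int_{-Y}^Y\ll Y\,(\sum_n|w_n|)^2\ll YX^{2/k}$). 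The truncation term, squared, weighted by $ku^{k-1}$, integrated and multiplied by $Y^2$, yields $\frac{X^{2/k-2}\log^2X}{Y}$. For the zero-sum I would use $\big|\frac{(u+\xi)^\rho-u^\rho}{\rho}\big|\le\xi u^{\beta-1}$ and the standard consequence of the Vinogradov--Korobov region $\beta\le 1-c(\log T)^{-2/3}(\log\log T)^{-1/3}$ with zero-density bounds, namely $\sum_{|\operatorname{Im}\rho|\le T}u^{\beta-1}\ll\exp\!\big(-c_1(\log X/\log\log X)^{1/3}\big)$; here the exponent $1/3$ is forced because $\log u/(\log T)^{2/3}\asymp(\log X)^{1/3}$ when $\log T\asymp\log X$. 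Bounding the squared zero-sum by $\xi^2\exp(-2c_1(\log X/\log\log X)^{1/3})$, weighting by $ku^{k-1}$ and integrating (the $u$-integral then contributing $\asymp X^{2/k-1}/Y^2$), and multiplying by $Y^2$, produces $X^{2/k-1}\exp\!\big(-c_1(\log X/\log\log X)^{1/3}\big)$; the three contributions sum to the asserted bound.

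The main obstacle is to make this short-interval prime variance hold \emph{uniformly} across the whole admissible range of $Y$. Near the transition $\xi\asymp1$ the crude diagonal estimate $\int_0^\infty\big|\sum_{x<n^k\le x+\frac1{2Y}}w_n\big|^2\,dx\asymp\frac1{2Y}\sum_n w_n^2\asymp X^{1/k}\log X/(2Y)$ loses a logarithm against the target, so one cannot merely split into regimes but must let the cancellation in the zero-sum do the work there; and the truncation height $T$ has to be chosen so that the Vinogradov--Korobov saving and an admissible truncation error survive simultaneously, which is exactly what pins the exponent at $(\log X/\log\log X)^{1/3}$.
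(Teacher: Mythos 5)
The paper itself proves this lemma only by citation to Gambini--Languasco--Zaccagnini, whose argument (in the Languasco--Zaccagnini tradition) is indeed the skeleton you chose: Gallagher's lemma to convert the mean square into a short-interval variance, the substitution $x=u^k$ producing intervals of length $\xi\asymp X^{1/k-1}/Y$, the $O(1)$ sawtooth term giving $Y^2X$, and a Vinogradov--Korobov saving giving the exponential term. (You also read the hypothesis correctly: the stated range $Y\le\frac12X^{1-5/(6k)+\delta}$ must be $Y\le\frac12X^{-1+5/(6k)-\delta}$, which is exactly the condition $\xi\gg u^{1/6+\delta'}$, and it is satisfied in the paper's application with $Y=\Delta$, $k=2$.) So your strategy matches the cited proof. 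But the execution of the zero-sum step has a genuine gap. You bound the zero sum \emph{pointwise} by $\xi\sum_{|\mathrm{Im}\,\rho|\le T}u^{\beta-1}$ and claim this is $\ll\xi\exp(-c_1(\log X/\log\log X)^{1/3})$ with $\log T\asymp\log X$. That claim is false unless $T$ is quite small: the zeros with $\beta$ near $\frac12$ alone contribute $\gg \xi\,T u^{-1/2}\log T$, so no saving survives once $T\gg u^{1/2}$; with zero-density input (Huxley, $N(\sigma,T)\ll T^{12(1-\sigma)/5}\log^{C}T$) the exponential saving forces $T\ll u^{5/12-\delta}$. But then the truncation error $u\log^2(uT)/T\gg u^{7/12}$ swamps $\xi\asymp u^{1/6}$ at the top of the $Y$-range, and its squared, integrated contribution exceeds the allowed $X^{2/k-1}$ term by a positive power of $X$. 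No choice of $T$ repairs this: a pointwise prime-counting statement in intervals of length $u^{1/6}$ is far beyond even the Riemann Hypothesis. The proof must keep the integration over $u$: one bounds $\int\big|\sum_{|\mathrm{Im}\,\rho|\le T}\frac{(u+\xi)^\rho-u^\rho}{\rho}\big|^2du$ via $\min\big(\xi u^{\beta-1},u^{\beta}/|\mathrm{Im}\,\rho|\big)$ together with the zero-density estimate, i.e.\ the Saffari--Vaughan mean-square PNT in short intervals, valid precisely for $\xi\ge u^{1/6+\delta}$ --- this is where both the $5/(6k)$ constraint and the exponent $1/3$ actually come from. Your closing paragraph names this tension (``let the cancellation in the zero-sum do the work'') but supplies no mechanism, so as written the middle of the argument fails in exactly the hardest regime.

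A secondary misattribution: the term $X^{2/k-2}\log^2X/Y$ does not come from the explicit-formula truncation. It arises from the two boundary blocks in Gallagher's inequality --- the windows $(x,x+\frac1{2Y}]$ that straddle the endpoints of $(\lambda_0X,X]$, where the prime/integer cancellation is unavailable and one estimates trivially: each such block carries $\ll\xi\log X$, squared gives $\xi^2\log^2X$, over a set of measure $\asymp 1/Y$, times the factor $Y^2$, which is $X^{2/k-2}\log^2X/Y$. Balancing a truncation error against this term, as you propose, both misidentifies its source and, per the previous paragraph, cannot be carried out with an admissible $T$.
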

\begin{proof}
See (\cite{Gambini}, Lemma 1 and Lemma 2).
\end{proof}

\begin{lemma}\label{mathfrakSest} Let $\frac{11}{12}<\gamma<1$ and $\Delta\leq|t|\leq H$. Then there exists a sequence of real numbers $X_1,\,X_2,\ldots \to \infty $ such that
\begin{equation*}
\min\Big\{\big|S_1(\lambda_{1}t)\big|,\big|S_1(\lambda_2 t)\big|\Big\}\ll X_j^{\frac{37-12\gamma}{26}}\log^5X_j\,,\quad j=1,2,\dots\,.
\end{equation*}
\end{lemma}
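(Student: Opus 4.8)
The plan is to bound the minimum of the two Piatetski--Shapiro prime sums by reducing, via a two--denominator argument, to a single minor--arc estimate for $S_1$. For every $t$ with $\Delta\le|t|\le H$ I would first invoke Dirichlet's theorem to produce rational approximations $a_j/q_j$ to $\lambda_j t$ (for $j=1,2$) with controlled denominators $q_j$ and $|\lambda_j t-a_j/q_j|\le 1/(q_j Q)$ for a suitable parameter $Q$. The mechanism that drives the ``$\min$'' is that $\lambda_1 t$ and $\lambda_2 t$ cannot both be simultaneously close to rationals of small denominator: if they were, then $\lambda_1/\lambda_2\approx (a_1q_2)/(a_2q_1)$ would be an unexpectedly good rational approximation with denominator $\ll q_1q_2$, contradicting the quality of the convergent $a_0/q_0$ (which controls $\|q\lambda_1/\lambda_2\|$ from below for $q<q_0$). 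Hence for each such $t$ at least one of $\lambda_1t,\lambda_2t$ is genuinely on a minor arc relative to a single prime sum, where a nontrivial bound is available, and the minimum is governed by that single-sum estimate.

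For the single-sum estimate I would remove the Piatetski--Shapiro condition by detection. Writing $\mathbf 1_{\,p=[n^{1/\gamma}]}=(p+1)^\gamma-p^\gamma-\big(\psi((p+1)^\gamma)-\psi(p^\gamma)\big)$ and using $p^{1-\gamma}\big((p+1)^\gamma-p^\gamma\big)=\gamma+\mathcal{O}(p^{-1})$, I would split $S_1(\alpha)$ (with $\alpha=\lambda_j t$) into a main linear prime sum $\gamma\sum_{\lambda_0X<p\le X}e(\alpha p)\log p$ plus the $\psi$--sum $\sum p^{1-\gamma}\big(\psi((p+1)^\gamma)-\psi(p^\gamma)\big)e(\alpha p)\log p$, the $k=1$ analogue of $\Omega(t)$. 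The linear prime sum I would bound by Vaughan's identity together with Vinogradov's method; for $\alpha$ on a minor arc and $q\asymp X^{12/13}$ the dominant Vaughan term is of size about $X^{25/26}\log^4 X$, comfortably below the target. The $\psi$--sum is the binding term: I would expand $\psi$ as a truncated Fourier series via Lemma \ref{Fourier}, extract the saving of size $p^{\gamma-1}$ hidden in the difference $\psi((p+1)^\gamma)-\psi(p^\gamma)$, and reduce to exponential sums of the shape $\sum_p e(\alpha p+h p^\gamma)\log p$, which I would estimate by the van der Corput/exponent-pair machinery (as in the proof of Lemma \ref{Omegaest}). An optimal choice of exponent pair produces exactly the exponent $\frac{37-12\gamma}{26}$ and the surviving power $\log^5 X$.

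Assembling the two contributions and choosing $X=q_0^{13/6}$ calibrates $\Delta=X^{-12/13}\log X$ and $H=\log^2X/\varepsilon$ so that the denominators forced by the two--denominator step fall into the admissible Vinogradov and exponent-pair ranges uniformly for $t\in[\Delta,H]$; this is precisely what makes the supremum over the minor arc obey the stated bound. Since $\lambda_1/\lambda_2$ is irrational, there are infinitely many convergents $a_0/q_0$ with $q_0\to\infty$, and the corresponding values $X_j=q_{0,j}^{13/6}$ furnish the required sequence $X_1,X_2,\dots\to\infty$ for which the estimate holds.

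The main obstacle I anticipate is the $\psi$--sum: pushing the exponent down to $\frac{37-12\gamma}{26}$ requires both the correct exponent pair and careful handling of the difference of $\psi$-values, whose effective $p^{\gamma-1}$ saving must be extracted cleanly before the exponential-sum estimates are applied, and this is exactly where the constraint $\gamma>\frac{11}{12}$ (equivalently $\frac{37-12\gamma}{26}<1$) enters. A secondary delicacy is the bookkeeping in the two--denominator step: one must verify that the specific choices of $\Delta$, $H$, and $X=q_0^{13/6}$ together genuinely preclude $\lambda_1 t$ and $\lambda_2 t$ from being simultaneously well approximable, so that the single-sum minor-arc bound is always applicable and the resulting estimate is uniform in $t$.
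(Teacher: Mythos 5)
Your outline is essentially the argument behind this lemma: the paper itself gives no proof beyond citing Lemma 7 of \cite{Dimitrov2022}, and that lemma is proved exactly as you describe, by a Dirichlet/two-denominator step that uses the convergent $a_0/q_0$ (with $X=q_0^{13/6}$) to force at least one of $\lambda_1t,\lambda_2t$ onto a minor arc, followed by a minor-arc bound for the single Piatetski--Shapiro sum obtained from the splitting into the linear prime sum plus the $\psi$-difference term, treated by Vaughan's identity and exponent pairs. Your reconstruction is consistent with that route, including the role of the constraint $\gamma>\frac{11}{12}$.
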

\begin{proof}
See (\cite{Dimitrov2022}, Lemma 7).
\end{proof}

\begin{lemma}\label{intS4} We have
\begin{equation*}
\int\limits_{0}^1|S_2(t)|^4\,dt\ll X^{2-\gamma+\delta}\,.
\end{equation*}
\end{lemma}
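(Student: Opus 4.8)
The plan is to read the fourth power moment as a weighted count of the solutions of $p_1^2+p_2^2=p_3^2+p_4^2$ in Piatetski--Shapiro primes, and to control that count by a divisor‑type bound together with the sparsity of such primes furnished by Lemma \ref{Shapiroasymp}. First I would square $S_2$ and invoke orthogonality. Writing $S_2(t)^2=\sum_m c_m e(mt)$ with $c_m=\sum_{p^2+q^2=m}(pq)^{1-\gamma}\log p\log q$, the inner sum running over primes $p=[n_1^{1/\gamma}]$, $q=[n_2^{1/\gamma}]$ with $\lambda_0X<p^2,q^2\le X$, Parseval's identity gives $\int_0^1|S_2(t)|^4\,dt=\sum_m c_m^2$. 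Every prime here satisfies $p,q\le X^{1/2}$, so $(pq)^{1-\gamma}\le X^{1-\gamma}$ and $\log p\log q\le\log^2X$; hence $0\le c_m\le X^{1-\gamma}\log^2X\,r(m)$, where $r(m)$ denotes the number of representations $m=p^2+q^2$ by Piatetski--Shapiro primes in the relevant range. It therefore suffices to show $\sum_m r(m)^2\ll X^{\gamma+\delta}$, since then $\int_0^1|S_2(t)|^4\,dt\ll X^{2(1-\gamma)}\log^4X\cdot X^{\gamma+\delta}\ll X^{2-\gamma+\delta}$.

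The decisive manoeuvre is the elementary inequality $\sum_m r(m)^2\le\big(\max_m r(m)\big)\sum_m r(m)$, which handles the diagonal and the off‑diagonal in one stroke. For the maximum I would simply discard the primality and the Piatetski--Shapiro conditions and bound $r(m)$ by the total number of representations of $m$ as a sum of two integer squares; the familiar estimate $r_2(m)\ll m^{\delta}$, valid for the number of such representations, together with $m\le 2X$, yields $\max_m r(m)\le\max_{m\le 2X}r_2(m)\ll X^{\delta}$.

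For the total I observe that $\sum_m r(m)$ is nothing but the number of ordered pairs of Piatetski--Shapiro primes in $(\sqrt{\lambda_0X},\,\sqrt X\,]$, so that $\sum_m r(m)=\big(\#\{p=[n^{1/\gamma}]:\lambda_0X<p^2\le X\}\big)^2$. Since $\gamma>\tfrac{63}{64}>\tfrac{2426}{2817}$, Lemma \ref{Shapiroasymp} applies and gives $\#\{p=[n^{1/\gamma}]:\lambda_0X<p^2\le X\}\ll X^{\gamma/2}$, whence $\sum_m r(m)\ll X^{\gamma}$. Combining the two bounds gives $\sum_m r(m)^2\ll X^{\gamma+\delta}$, and the estimate for $\int_0^1|S_2(t)|^4\,dt$ follows at once.

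I do not expect a genuine obstacle in this argument, but the point demanding care is that the trivial diagonal $\{p_1,p_2\}=\{p_3,p_4\}$ already contributes $\asymp X^{2-\gamma}$; the argument must therefore lose nothing beyond $X^{\delta}$ on the complementary solutions. This is precisely what the bound $\max_m r(m)\ll X^{\delta}$ secures, and it is the reason the $L^2$--$L^\infty$ route via the pointwise estimate of Lemma \ref{Omegaest} is avoided: that estimate is too weak to reach the exponent $2-\gamma$, whereas the product $\big(\max_m r(m)\big)\sum_m r(m)$ captures both the diagonal and the off‑diagonal with the correct saving coming directly from the thinness of the Piatetski--Shapiro sequence.
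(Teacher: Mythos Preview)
Your argument is correct. Parseval turns the fourth moment into $\sum_m c_m^2$, your pointwise bound $c_m\le X^{1-\gamma}(\log X)^2\,r(m)$ is clean, and the $L^\infty$--$L^1$ splitting $\sum_m r(m)^2\le(\max_m r(m))\sum_m r(m)$ together with the divisor bound $r_2(m)\ll m^\delta$ and Lemma~\ref{Shapiroasymp} yields exactly $X^{2-\gamma+\delta}$ after absorbing the logarithmic factors into~$\delta$.

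As for comparison: the paper does not actually prove this lemma but simply quotes equation~(12) of Zhai's paper \cite{Zhai}. Your self-contained argument is the standard route to such fourth-moment bounds (and is almost certainly what underlies Zhai's estimate), so you are not doing anything different in spirit---you are just filling in what the paper outsources to a reference. One minor remark: invoking Lemma~\ref{Shapiroasymp} for $\sum_m r(m)$ is convenient but not really needed, since the trivial bound $N\le X^{1/2}$ already gives $\sum_m r(m)\le X$, and $X^{2(1-\gamma)}\cdot X^\delta\cdot X=X^{3-2\gamma+\delta}$, which is worse; so your use of the Piatetski--Shapiro count is genuinely what brings the exponent down to $2-\gamma$.
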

\begin{proof}
See (\cite{Zhai}, (12)).
\end{proof}

\section{Beginning of the proof}
\indent

Consider the sum
\begin{equation}\label{Gamma}
\Gamma(X)=\sum\limits_{\lambda_0X<p_1,p_2,p^2_3\leq X\atop{p_i=[n^{1/\gamma}_i],\, i=1,2,3}}\theta(\lambda_1p_1+\lambda_2p_2+\lambda_3p^2_3+\eta)p^{1-\gamma}_1p^{1-\gamma}_2p^{1-\gamma}_3\log p_1\log p_2\log p_3\,.
\end{equation}
Using the inverse Fourier transform for the function $\theta(x)$, we obtain
\begin{align*}
\Gamma(X)&=\sum\limits_{\lambda_0X<p_1,p_2,p^2_3\leq X\atop{p_i=[n^{1/\gamma}_i],\, i=1,2,3}}p^{1-\gamma}_1p^{1-\gamma}_2p^{1-\gamma}_3\log p_1\log p_2\log p_3\\
&\times\int\limits_{-\infty}^{\infty}\Theta(t)e\big((\lambda_1p_1+\lambda_2p_2+\lambda_3p^2_3+\eta)t\big)\,dt\\
&=\int\limits_{-\infty}^{\infty}\Theta(t)S_1(\lambda_1t)S_1(\lambda_2t)S_2(\lambda_3t)e(\eta t)\,dt\,.
\end{align*}
We decompose $\Gamma(X)$ as follows
\begin{equation}\label{Gammadecomp}
\Gamma(X)=\Gamma_1(X)+\Gamma_2(X)+\Gamma_3(X)\,,
\end{equation}
where
\begin{align}
\label{Gamma1}
&\Gamma_1(X)=\int\limits_{|t|<\Delta}\Theta(t)S_1(\lambda_1t)S_1(\lambda_2t)S_2(\lambda_3t)e(\eta t)\,dt\,,\\
\label{Gamma2}
&\Gamma_2(X)=\int\limits_{\Delta\leq|t|\leq H}\Theta(t)S_1(\lambda_1t)S_1(\lambda_2t)S_2(\lambda_3t)e(\eta t)\,dt\,,\\
\label{Gamma3}
&\Gamma_3(X)=\int\limits_{|t|>H}\Theta(t)S_1(\lambda_1t)S_1(\lambda_1t)S_2(\lambda_3t)e(\eta t)\,dt\,.
\end{align}
We shall estimate $\Gamma_1(X),\,\Gamma_2(X)$ and $\Gamma_3(X)$, respectively,
in the Sections \ref{SectionGamma1}, \ref{SectionGamma2} and \ref{SectionGamma3}.
In Section \ref{Sectionfinal} we shall complete the proof of Theorem \ref{Theorem}.

\section{Lower bound for $\mathbf{\Gamma_1(X)}$}\label{SectionGamma1}
\indent

\begin{lemma}\label{S2asymptotic} Let $\frac{13}{14}<\gamma<1$. Then
\begin{equation*}
S_2(t)=\gamma\Sigma_2(t)+\mathcal{O}\left(X^{\frac{21-7\gamma}{29}+\delta}\right)\,.
\end{equation*}
\end{lemma}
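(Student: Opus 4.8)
The plan is to detect the Piatetski--Shapiro constraint $p=[n^{1/\gamma}]$ by an elementary floor-function identity, which rewrites $S_2(t)$ as a smooth main sum plus the oscillatory remainder $\Omega(t)$; the latter is already controlled by Lemma \ref{Omegaest}, so essentially no hard analysis remains in this lemma itself.

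First I would record the characterization that, for a prime $p$, one has $p=[n^{1/\gamma}]$ for some integer $n$ exactly when there is an integer $n$ with $p^\gamma\le n<(p+1)^\gamma$. Because $0<\gamma<1$, the mean value theorem gives $(p+1)^\gamma-p^\gamma<\gamma<1$, so the half-open interval contains at most one integer and the indicator of the event equals the number of such $n$. Writing $\lfloor x\rfloor=x-\psi(x)-\tfrac12$ and counting the integers in $[p^\gamma,(p+1)^\gamma)$ then yields
\begin{equation*}
\mathbf{1}_{\{p=[n^{1/\gamma}]\}}=(p+1)^\gamma-p^\gamma+\big(\psi(-(p+1)^\gamma)-\psi(-p^\gamma)\big)\,.
\end{equation*}

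Next I would insert this identity into the definition \eqref{Sk} of $S_2(t)$, splitting the sum into a smooth part and a $\psi$-part. In the smooth part I use the expansion $(p+1)^\gamma-p^\gamma=\gamma p^{\gamma-1}+O(p^{\gamma-2})$; multiplying by the weight $p^{1-\gamma}$ collapses the leading term to the constant $\gamma$, so this part contributes
\begin{equation*}
\gamma\sum\limits_{\lambda_0X<p^2\leq X}e(tp^2)\log p=\gamma\,\Sigma(t)
\end{equation*}
together with an error $\ll\sum_{\lambda_0X<p^2\le X}p^{-1}\log p$. Since $p\asymp X^{1/2}$ throughout the range of summation, Mertens' estimate bounds this error by $O(1)$. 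The $\psi$-part is, by \eqref{Omega}, precisely $\Omega(t)$.

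Finally I would apply Lemma \ref{Omegaest}, which is valid on exactly the range $\frac{13}{14}<\gamma<1$ assumed here, to obtain $\Omega(t)\ll X^{(21-7\gamma)/29+\delta}$; this term dominates the $O(1)$ contribution and gives the stated asymptotic. The only genuinely delicate point is the cancellation in the exponential sum $\Omega(t)$ twisted by the fractional parts $\psi(-p^\gamma)$, but in this lemma that difficulty is entirely deferred to Lemma \ref{Omegaest}, leaving only the bookkeeping of the decomposition and the verification that the smooth main term reduces to $\gamma\Sigma(t)$.
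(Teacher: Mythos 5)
Your proposal is correct and follows essentially the same route as the paper: both detect the Piatetski--Shapiro condition via the identity $\mathbf{1}_{\{p=[n^{1/\gamma}]\}}=[-p^\gamma]-[-(p+1)^\gamma]=(p+1)^\gamma-p^\gamma+\psi(-(p+1)^\gamma)-\psi(-p^\gamma)$, reduce the smooth part to $\gamma\Sigma(t)+\mathcal{O}(1)$ using $(p+1)^\gamma-p^\gamma=\gamma p^{\gamma-1}+\mathcal{O}(p^{\gamma-2})$, and defer the $\psi$-part, which is exactly $\Omega(t)$, to Lemma \ref{Omegaest}. Your write-up merely spells out the counting argument and the $\mathcal{O}(1)$ error in slightly more detail than the paper does.
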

\begin{proof}
From \eqref{Sk}, \eqref{Sigma}, \eqref{Omega} and the well-known asymptotic formula
\begin{equation*}
(p+1)^\gamma-p^\gamma=\gamma p^{\gamma-1}+\mathcal{O}\left(p^{\gamma-2}\right)
\end{equation*}
we write
\begin{align}\label{S2est1}
S_2(t)&=\sum\limits_{\lambda_0X<p^2\leq X}p^{1-\gamma}\big([-p^\gamma]-[-(p+1)^\gamma]\big)e(t p^2)\log p\nonumber\\
&=\sum\limits_{\lambda_0X<p^2\leq X}p^{1-\gamma}\big((p+1)^\gamma-p^\gamma\big)e(t p^2)\log p\nonumber\\
&+\sum\limits_{\lambda_0X<p^2\leq X}p^{1-\gamma}\big(\psi(-(p+1)^\gamma)-\psi(-p^\gamma)\big)e(t p^2)\log p\nonumber\\
&=\gamma\Sigma_2(t)+\Omega_2(t)+\mathcal{O}(1)\,.
\end{align}
Bearing in mind \eqref{S2est1} and Lemma \ref{Omegaest}, we establish the statement in the lemma.
\end{proof}
Put
\begin{equation}\label{JX}
J(X)=\gamma^3\int\limits_{|t|<\Delta}\Theta(t)I_1(\lambda_1t)I_1(\lambda_2t)I_2(\lambda_3t)e(\eta t)\,dt\,.
\end{equation}
Now \eqref{Delta}, \eqref{Sk}, \eqref{Sigma}, \eqref{U}, \eqref{Ik}, \eqref{Gamma1}, \eqref{JX}, 
Cauchy's inequality, Lemma \ref{Fourier}, Lemma \ref{intSintI}, Lemma \ref{S1I1asymptotic} and Lemma \ref{S2asymptotic} imply 
\begin{align*}
\Gamma_1(X)-J(X)
&=\gamma^2\int\limits_{|t|<\Delta}\Theta(t)\Big(S_1(\lambda_1t)-\gamma I_1(\lambda_1t)\Big)I_1(\lambda_2t)I_2(\lambda_3t)e(\eta t)\,dt\nonumber\\
&+\gamma\int\limits_{|t|<\Delta}\Theta(t)S_1(\lambda_1t)\Big(S_1(\lambda_2t)-\gamma I_1(\lambda_2t)\Big)I_2(\lambda_3t)e(\eta t)\,dt\nonumber\\
&+\int\limits_{|t|<\Delta}\Theta(t)S_1(\lambda_1t)S_1(\lambda_2t)\Big(S_2(\lambda_3t)-\gamma I_2(\lambda_3t)\Big)e(\eta t)\,dt\nonumber\\
&\ll\varepsilon \frac{X}{e^{(\log X)^{1/5}}}\left[\Bigg(\int\limits_{|t|<\Delta}\big|I_1(\lambda_2t)\big|^2\,dt\Bigg)^\frac{1}{2}\Bigg(\int\limits_{|t|<\Delta}\big|I_2(\lambda_3t)\big|^2\,dt\Bigg)^\frac{1}{2}\right.\nonumber\\
&\left.+\Bigg(\int\limits_{|t|<\Delta}\big|S_1(\lambda_1t)\big|^2\,dt\Bigg)^\frac{1}{2}\Bigg(\int\limits_{|t|<\Delta}\big|I_2(\lambda_3t)\big|^2\,dt\Bigg)^\frac{1}{2}\right]\nonumber\\
&+\varepsilon \int\limits_{|t|<\Delta}\big|S_1(\lambda_1t)\big|\big|S_1(\lambda_2t)\big|\Big|\gamma\Sigma_2(\lambda_3t)-\gamma I_2(\lambda_3t)+\mathcal{O}\left(X^{\frac{21-7\gamma}{29}+\delta}\right)\Big|\,dt\nonumber\\
\end{align*}

\begin{align}\label{Gamma1-JX}
&\ll\varepsilon\frac{X^\frac{3}{2}}{e^{(\log X)^{1/6}}}+\varepsilon \int\limits_{|t|<\Delta}\big|S_1(\lambda_1t)\big|\big|S_1(\lambda_2t)\big|\big|\Sigma_2(\lambda_3t)-U_2(\lambda_3t)\big|\,dt\nonumber\\
&+\varepsilon \int\limits_{|t|<\Delta}\big|S_1(\lambda_1t)\big|\big|S_1(\lambda_2t)\big|\big|U_2(\lambda_3t)-I_2(\lambda_3t)\big|\,dt\nonumber\\
&=\varepsilon\Bigg(\frac{X^\frac{3}{2}}{e^{(\log X)^{1/6}}}+J_1+J_2\Bigg)\,,
\end{align}
say. Using  Cauchy's inequality, Lemma \ref{Shapiroasymp}, Lemma \ref{intSintI} and Lemma \ref{Languasco}, we get
\begin{align}\label{J1est}
J_1&\ll X\Bigg(\int\limits_{-\Delta}^\Delta\big|S_1(\lambda_1t)\big|^2\,dt\Bigg)^\frac{1}{2}\Bigg(\int\limits_{-\Delta}^\Delta\big|\Sigma_2(\lambda_3t)-U_2(\lambda_3t)\big|^2\,dt\Bigg)^\frac{1}{2}\ll\frac{X^\frac{3}{2}}{e^{(\log X)^{1/6}}}\,.
\end{align}
By Euler's summation formula, we have
\begin{equation}\label{I-U}
I_2(t)-U_2(t)\ll1+|t|X\,.
\end{equation}
From \eqref{I-U}, Cauchy's inequality and Lemma \ref{intSintI}, we derive
\begin{align}\label{J2est}
J_2&\ll (1+\Delta X)\Bigg(\int\limits_{-\Delta}^\Delta\big|S_1(\lambda_1t)\big|^2\,dt\Bigg)^\frac{1}{2}\Bigg(\int\limits_{-\Delta}^\Delta\big|S_1(\lambda_2t)\big|^2\,dt\Bigg)^\frac{1}{2}\ll\Delta X^2\log^3X\,.
\end{align}
On the other hand for the integral defined by \eqref{JX}, we write
\begin{equation}\label{JXest}
J(X)=B(X)+\Phi\,,
\end{equation}
where
\begin{equation*}
B(X)=\gamma^3\int\limits_{-\infty}^{\infty}\Theta(t)I_1(\lambda_1t)I_1(\lambda_2t)I_2(\lambda_3t)e(\eta t)\,dt
\end{equation*}
and
\begin{equation}\label{Phi}
\Phi\ll\int\limits_{\Delta}^{\infty }|\Theta(t)||I_1(\lambda_1t)I_1(\lambda_2t)I_2(\lambda_3t)|\,dt\,.
\end{equation}
Arguing as in (\cite{Dimitrov2015}, Lemma 4), we deduce that if
\begin{equation*}
\lambda_0<\min\bigg(\dfrac{\lambda_1}{4|\lambda_3|}\,,\dfrac{\lambda_2}{4|\lambda_3|}\,,\dfrac{1}{16}\bigg)
\end{equation*}
then
\begin{equation}\label{BXest}
B(X)\gg\varepsilon X^\frac{3}{2}\,.
\end{equation}
By \eqref{Ik} and (\cite{Titchmarsh}, Lemma 4.2), we get
\begin{equation}\label{Ikest}
I_k(t)\ll X^{\frac{1}{k}-1}\min\Big(X,\, |t|^{-1}\Big)\,.
\end{equation}
Using \eqref{Phi}, \eqref{Ikest} and Lemma \ref{Fourier}, we obtain
\begin{equation}\label{Phiest}
\Phi\ll\frac{\varepsilon X^\frac{1}{2}}{\Delta}\,.
\end{equation}
Bearing in mind \eqref{Delta}, \eqref{Gamma1-JX}, \eqref{J1est}, \eqref{J2est}, \eqref{JXest}, \eqref{BXest} and  \eqref{Phiest}, we establish
\begin{equation}\label{Gamma1est}
\Gamma_1(X)\gg\varepsilon X^\frac{3}{2}\,.
\end{equation}

\section{Upper bound for $\mathbf{\Gamma_2(X)}$}\label{SectionGamma2}
\indent

Put
\begin{equation}\label{mathfrakS}
\mathfrak{S}(t,X)=\min\Big\{\big|S_1(\lambda_{1}t)\big|,\big|S_1(\lambda_2 t)\big|\Big\}\,.
\end{equation}
Taking into account \eqref{Gamma2}, \eqref{mathfrakS}, Lemma \ref{Fourier} and Lemma \ref{mathfrakSest}, we deduce
\begin{align}\label{Gamma2est1}
\Gamma_2(X_j)&\ll\varepsilon\int\limits_{\Delta\leq|t|\leq H}\mathfrak{S}(t, X_j)^\frac{1}{2}\big|S_1(\lambda_1 t)\big|^\frac{1}{2}\big|S_1(\lambda_2 t)\big|\big|S_2(\lambda_3 t)\big|\,dt\nonumber\\
&+\varepsilon\int\limits_{\Delta\leq|t|\leq H}\mathfrak{S}(t, X_j)^\frac{1}{2}\big|S_1(\lambda_1 t)\big|\big|S_1(\lambda_2 t)\big|^\frac{1}{2}\big|S_2(\lambda_3 t)\big|\,dt\nonumber\\
&\ll\varepsilon X_j^{\frac{37-12\gamma}{52}+\delta}\big(\Psi_1+\Psi_2\big)\,,
\end{align}
where
\begin{align}
\label{Psi1}
&\Psi_1=\int\limits_{\Delta}^H\big|S_1(\lambda_1 t)\big|^\frac{1}{2}\big|S_1(\lambda_2 t)\big|\big|S_2(\lambda_3 t)\big|\,dt\,,\\
&\Psi_2=\int\limits_{\Delta}^H\big|S_1(\lambda_1 t)\big|\big|S_1(\lambda_2 t)\big|^\frac{1}{2}\big|S_2(\lambda_3 t)\big|\,dt\,.\nonumber
\end{align}
We estimate only $\Psi_1$ and the estimation of $\Psi_2$ proceeds in the same way. From \eqref{Psi1} and Cauchy's inequality, we derive
\begin{equation}\label{Psi1est1}
\Psi_1\ll\Bigg(\int\limits_{\Delta}^H\big|S_1(\lambda_2t)\big|^2\,dt\Bigg)^\frac{1}{2}\Bigg(\int\limits_{\Delta}^H\big|S_1(\lambda_1t)\big|^2\,dt\Bigg)^\frac{1}{4}
\Bigg(\int\limits_{\Delta}^H\big|S_2(\lambda_3 t)\big|^4\,dt\Bigg)^\frac{1}{4}\,.
\end{equation}
Using Lemma \ref{intSintI} (i), we obtain
\begin{equation}\label{DeltaH2}
\int\limits_{\Delta}^H\big|S_1(\lambda_kt)\big|^2\,dt\ll HX_j^{2-\gamma}\log X_j\,, \quad k=1, 2\,.
\end{equation}
By Lemma \ref{intS4}, we find
\begin{equation}\label{DeltaH4}
\int\limits_{\Delta}^H\big|S_2(\lambda_3t)\big|^4\,dt\ll HX_j^{2-\gamma+\delta}\,.
\end{equation}
Now \eqref{Psi1est1} -- \eqref{DeltaH4} yield 
\begin{equation}\label{Psi1est2}
\Psi_1\ll H X_j^{2-\gamma+\delta}\,.
\end{equation}
Combining \eqref{varepsilon}, \eqref{H}, \eqref{Gamma2est1} and \eqref{Psi1est2}, we get
\begin{equation}\label{Gamma2est2}
\Gamma_2(X_j)\ll X_j^{\frac{37-12\gamma}{52}+\delta}X_j^{2-\gamma+\delta}=X_j^{\frac{141-64\gamma}{52}+\delta}\ll\frac{\varepsilon X_j^\frac{3}{2}}{\log X_j}\,.
\end{equation}

\section{Upper bound for $\mathbf{\Gamma_3(X)}$}\label{SectionGamma3}
\indent

By \eqref{Sk}, \eqref{Gamma3}, Lemma \ref{Fourier}  and Lemma \ref{Shapiroasymp}, it follows
\begin{equation}\label{Gamma3est1}
\Gamma_3(X)\ll X^3\int\limits_{H}^{\infty}\frac{1}{t}\bigg(\frac{k}{2\pi t\varepsilon/8}\bigg)^k \,dt=\frac{X^3}{k}\bigg(\frac{4k}{\pi\varepsilon H}\bigg)^k\,.
\end{equation}
Choosing $k=[\log X]$ from \eqref{H} and \eqref{Gamma3est1}, we deduce
\begin{equation}\label{Gamma3est}
\Gamma_3(X)\ll1\,.
\end{equation}

\section{Proof of the Theorem}\label{Sectionfinal}
\indent

Summarizing  \eqref{varepsilon}, \eqref{Gammadecomp}, \eqref{Gamma1est}, \eqref{Gamma2est2} and \eqref{Gamma3est}, we derive
\begin{equation*}
\Gamma(X_j)\gg\varepsilon X_j^\frac{3}{2}=X_j^{\frac{141-64\gamma}{52}+\theta}\,.
\end{equation*}
The last estimation implies
\begin{equation}\label{Lowerbound}
\Gamma(X_j) \rightarrow\infty \quad \mbox{ as } \quad X_j\rightarrow\infty\,.
\end{equation}
Bearing in mind  \eqref{Gamma} and \eqref{Lowerbound} we establish Theorem \ref{Theorem}.

\vskip30pt
\footnotesize
\begin{flushleft}
S. I. Dimitrov\\
\quad\\
Faculty of Applied Mathematics and Informatics\\
Technical University of Sofia \\
Blvd. St. Kliment Ohridski 8 \\
Sofia 1000, Bulgaria\\
e-mail: sdimitrov@tu-sofia.bg\\
\end{flushleft}

\begin{flushleft}
Department of Bioinformatics and Mathematical Modelling\\
Institute of Biophysics and Biomedical Engineering\\
Bulgarian Academy of Sciences\\
Acad. G. Bonchev Str. Bl. 105, Sofia 1113, Bulgaria \\
e-mail: xyzstoyan@gmail.com\\
\end{flushleft}


\begin{thebibliography}{}

\bibitem{ABaker} A. Baker, {\it On some Diophantine inequalities involving primes},
J. Reine Angew. Math., {\bf 228}, (1967), 166 -- 181.

\bibitem{Baker1982} R. Baker, G. Harman, {\it Diophantine approximation by prime numbers},
J. Lond. Math. Soc., {\bf 25}, (1982), 201 -- 215.

\bibitem{Dimitrov2015} S. I. Dimitrov, T. Todorova, {\it Diophantine approximation by prime numbers of a special form},
Annuaire Univ. Sofia, Fac. Math. Inform., {\bf102}, (2015), 71 -- 90.

\bibitem{Dimitrov2022} S. I. Dimitrov, {\it Diophantine approximation by Piatetski-Shapiro primes},
Indian J. Pure Appl. Math., \textbf{53}, 4, (2022), 875 -- 883.

\bibitem{Dimitrov2025} S. I. Dimitrov, M. D. Lazarova, {\it On the distribution of $\alpha p^2$ modulo one over primes of the form $[n^c]$},
Ramanujan J., \textbf{68}, 3, (2025), Art. 79.

\bibitem{Gambini} A. Gambini, A. Languasco, A. Zaccagnini, {\it A Diophantine approximation problem with two primes and one $k$-th power of a prime}, 
J. Number Theory, \textbf{188}, (2018), 210 -- 228.

\bibitem{Harman1991} G. Harman, {\it Diophantine approximation by prime numbers},
J. Lond. Math. Soc., \textbf{44}, (1991), 218 -- 226.

\bibitem{Lau-Liu} K. W. Lau, M. C. Liu, {\it Linear approximation by primes},
Bull. Austral. Math. Soc., \textbf{19}, (1978), 457 -- 466.

\bibitem{Li} W. Li, T. Wang, {\it Diophantine approximation with two primes and one square of prime},
Chinese Quart. J. Math., \textbf{27}, (2012), 417 -- 423.

\bibitem{Maier2022} H. Maier, M. Rassias, {\it The ternary Goldbach problem with a missing digit and other primes of special types}, 
Analysis at Large: Dedicated to the Life and Work of Jean Bourgain, Springer, (2022), 333 -- 362. 

\bibitem{Maier2023} H. Maier, M. Rassias, {\it The ternary Goldbach problem with two Piatetski-Shapiro primes and a prime with a missing digit},
Commun. Contemp. Math., \textbf{25}, (2023), 2150101.

\bibitem{Maier2025} H. Maier, M. Rassias,  {\it Prime avoidance property of $k$-th powers of Piatetski–Shapiro primes}, 
J. Théor. Nombres Bordeaux, \textbf{37}, (2025), 715 -- 725.

\bibitem{Mato} K. Matom\"{a}ki, {\it Diophantine approximation by primes},
Glasgow Math. J., {\bf 52}, (2010), 87 -- 106.

\bibitem{Mu} Q. Mu, Y. Qu, {\it A Diophantine inequality with prime variables and mixed power},
Acta Math. Sinica (Chin. Ser.), {\bf 58}, (2015), 491 -- 500.

\bibitem{Shapiro1952} I. I. Piatetski-Shapiro, {\it On a variant of the Waring-Goldbach problem},
Mat. Sb., {\bf30}, (1952), 105 -- 120, (in Russian).

\bibitem{Shapiro1953} I. I. Piatetski-Shapiro,
{\it On the distribution of prime numbers in sequences of the form $[f(n)]$},
Mat. Sb., {\bf 33}, (1953), 559 -- 566.

\bibitem{Ramachandra} K. Ramachandra,  {\it On the sums $\sum\lambda_jf_j(p_j)$},
J. Reine Angew. Math., {\bf262/263}, (1973), 158 -- 165.

\bibitem{Rivat-Sargos} J. Rivat, P. Sargos, {\it Nombres premiers de la forme $[n^c]$},
Canad. J. Math., \textbf{53}, (2001), 414 -- 433.

\bibitem{Rivat-Wu} J. Rivat, J. Wu, {\it Prime numbers of the form $[n^c]$},
Glasg. Math. J, {\bf 43}, (2001), 237 -- 254.

\bibitem{Titchmarsh}E. Titchmarsh, {\it The Theory of the Riemann Zeta-function} 
(revised by D. R. Heath-Brown), Clarendon Press, Oxford (1986).

\bibitem{Todorova} T. Todorova, A. Georgieva, {\it A Diophantine inequality involving mixed powers of primes with a specific type}, 
Mathematics, \textbf{13}, (2025), Art. 3065.

\bibitem{Vaughan1974} R. C. Vaughan, {\it Diophantine approximation by prime numbers I},
Proc. Lond. Math. Soc. {\bf28}, (1974), 373 -- 384.

\bibitem{Zhai} W. Zhai, {\it On the Waring–Goldbach problem in thin sets of primes}, 
Acta Math. Sinica (Chin. Ser.), {\bf41}, (1998), 595 -- 608.

\end{thebibliography}
\end{document}